\theoremstyle{plain}
\begin{document}
\baselineskip = 16pt

\newcommand \ZZ {{\mathbb Z}}
\newcommand \NN {{\mathbb N}}
\newcommand \RR {{\mathbb R}}
\newcommand \PR {{\mathbb P}}
\newcommand \AF {{\mathbb A}}
\newcommand \GG {{\mathbb G}}
\newcommand \QQ {{\mathbb Q}}
\newcommand \CC {{\mathbb C}}
\newcommand \bcA {{\mathscr A}}
\newcommand \bcC {{\mathscr C}}
\newcommand \bcD {{\mathscr D}}
\newcommand \bcF {{\mathscr F}}
\newcommand \bcG {{\mathscr G}}
\newcommand \bcH {{\mathscr H}}
\newcommand \bcM {{\mathscr M}}
\newcommand \bcI {{\mathscr I}}
\newcommand \bcJ {{\mathscr J}}
\newcommand \bcK {{\mathscr K}}
\newcommand \bcL {{\mathscr L}}
\newcommand \bcO {{\mathscr O}}
\newcommand \bcP {{\mathscr P}}
\newcommand \bcQ {{\mathscr Q}}
\newcommand \bcR {{\mathscr R}}
\newcommand \bcS {{\mathscr S}}
\newcommand \bcV {{\mathscr V}}
\newcommand \bcU {{\mathscr U}}
\newcommand \bcW {{\mathscr W}}
\newcommand \bcX {{\mathscr X}}
\newcommand \bcY {{\mathscr Y}}
\newcommand \bcZ {{\mathscr Z}}
\newcommand \goa {{\mathfrak a}}
\newcommand \gob {{\mathfrak b}}
\newcommand \goc {{\mathfrak c}}
\newcommand \gom {{\mathfrak m}}
\newcommand \gon {{\mathfrak n}}
\newcommand \gop {{\mathfrak p}}
\newcommand \goq {{\mathfrak q}}
\newcommand \goQ {{\mathfrak Q}}
\newcommand \goP {{\mathfrak P}}
\newcommand \goM {{\mathfrak M}}
\newcommand \goN {{\mathfrak N}}
\newcommand \uno {{\mathbbm 1}}
\newcommand \Le {{\mathbbm L}}
\newcommand \Spec {{\rm {Spec}}}
\newcommand \Gr {{\rm {Gr}}}
\newcommand \Pic {{\rm {Pic}}}
\newcommand \Jac {{{J}}}
\newcommand \Alb {{\rm {Alb}}}
\newcommand \Corr {{Corr}}
\newcommand \Chow {{\mathscr C}}
\newcommand \Sym {{\rm {Sym}}}
\newcommand \Prym {{\rm {Prym}}}
\newcommand \cha {{\rm {char}}}
\newcommand \eff {{\rm {eff}}}
\newcommand \tr {{\rm {tr}}}
\newcommand \Tr {{\rm {Tr}}}
\newcommand \pr {{\rm {pr}}}
\newcommand \ev {{\it {ev}}}
\newcommand \cl {{\rm {cl}}}
\newcommand \interior {{\rm {Int}}}
\newcommand \sep {{\rm {sep}}}
\newcommand \td {{\rm {tdeg}}}
\newcommand \alg {{\rm {alg}}}
\newcommand \im {{\rm im}}
\newcommand \gr {{\rm {gr}}}
\newcommand \op {{\rm op}}
\newcommand \Hom {{\rm Hom}}
\newcommand \Hilb {{\rm Hilb}}
\newcommand \Sch {{\mathscr S\! }{\it ch}}
\newcommand \cHilb {{\mathscr H\! }{\it ilb}}
\newcommand \cHom {{\mathscr H\! }{\it om}}
\newcommand \colim {{{\rm colim}\, }} 
\newcommand \End {{\rm {End}}}
\newcommand \coker {{\rm {coker}}}
\newcommand \id {{\rm {id}}}
\newcommand \van {{\rm {van}}}
\newcommand \spc {{\rm {sp}}}
\newcommand \Ob {{\rm Ob}}
\newcommand \Aut {{\rm Aut}}
\newcommand \cor {{\rm {cor}}}
\newcommand \Cor {{\it {Corr}}}
\newcommand \res {{\rm {res}}}
\newcommand \red {{\rm{red}}}
\newcommand \Gal {{\rm {Gal}}}
\newcommand \PGL {{\rm {PGL}}}
\newcommand \Bl {{\rm {Bl}}}
\newcommand \Sing {{\rm {Sing}}}
\newcommand \spn {{\rm {span}}}
\newcommand \Nm {{\rm {Nm}}}
\newcommand \inv {{\rm {inv}}}
\newcommand \codim {{\rm {codim}}}
\newcommand \Div{{\rm{Div}}}
\newcommand \CH{{\rm{CH}}}
\newcommand \sg {{\Sigma }}
\newcommand \DM {{\sf DM}}
\newcommand \Gm {{{\mathbb G}_{\rm m}}}
\newcommand \tame {\rm {tame }}
\newcommand \znak {{\natural }}
\newcommand \lra {\longrightarrow}
\newcommand \hra {\hookrightarrow}
\newcommand \rra {\rightrightarrows}
\newcommand \ord {{\rm {ord}}}
\newcommand \Rat {{\mathscr Rat}}
\newcommand \rd {{\rm {red}}}
\newcommand \bSpec {{\bf {Spec}}}
\newcommand \Proj {{\rm {Proj}}}
\newcommand \pdiv {{\rm {div}}}
\newcommand \wt {\widetilde }
\newcommand \ac {\acute }
\newcommand \ch {\check }
\newcommand \ol {\overline }
\newcommand \Th {\Theta}
\newcommand \cAb {{\mathscr A\! }{\it b}}

\newenvironment{pf}{\par\noindent{\em Proof}.}{\hfill\framebox(6,6)
\par\medskip}

\newtheorem{theorem}[subsection]{Theorem}
\newtheorem{conjecture}[subsection]{Conjecture}
\newtheorem{proposition}[subsection]{Proposition}
\newtheorem{lemma}[subsection]{Lemma}
\newtheorem{remark}[subsection]{Remark}
\newtheorem{remarks}[subsection]{Remarks}
\newtheorem{definition}[subsection]{Definition}
\newtheorem{corollary}[subsection]{Corollary}
\newtheorem{example}[subsection]{Example}
\newtheorem{examples}[subsection]{examples}

\title{Beilinson's conjecture on  K3 surfaces with an involution}
\author{Kalyan Banerjee}

\email{kalyan.ba@srmap.edu.in}

\begin{abstract}
In this note, we prove that the Beilinson conjecture holds for certain examples of K3 surfaces  over $\bar {\mathbb{Q}}$  equipped with an involution, when the quotient of the surface by the involution is the projective plane branched along a sextic.
\end{abstract}
\maketitle

\section{Introduction}

One of the most important problems in algebraic geometry is to compute the Chow groups for higher codimensional cycles on a smooth projective variety. The theorem due to Mumford in \cite{M}, proves that the Chow group of zero cycles on a smooth projective surface over complex numbers is infinite-dimensional, provided that the geometric genus of the surface is greater than zero, in the sense that the natural maps from symmetric powers to the Chow group of zero cycles are never surjective. The converse is a conjecture due to Bloch, that is, if we have a smooth projective complex surface of geometric genus zero, then the Chow group of zero cycles is isomorphic to the group of points on an abelian variety.

This theorem has been proved for surfaces not of general type with geometric genus zero due to \cite{BKL}. The case for surfaces of general type, that is, surfaces of Kodaira dimension 2 with geometric genus zero, is still open. Some examples of such surfaces for which the Bloch's conjecture holds are due to \cite{B}, \cite{IM}, \cite{PW},\cite{V},\cite{VC}.

The situation over $\bar{\QQ}$ is completely different. The conjecture due to Beilinson states that for any smooth projective surface defined over $\bar{\QQ}$, the albanese kernel is trivial, that is, the Chow group of zero cycles of degree zero is isomorphic to the albanese variety.

This paper concerns the proof of Beilinson's conjecture for some class of K3 surfaces equipped with an involution, when the quotient of the surface is the projective plane $\PR^2$ and the branch locus is a sextic curve in $\PR^2$.

The technique due to \cite{Voi}, tells us that if we have  an involution on a complex K3 surface which acts as identity on the two form on the surface,  then the involution acts as identity on the $A_0$ (zero cycles of degree zero modulo rational equivalence) of the K3 surface. This has also been proved for some classes of K3 surfaces due to \cite{GT},\cite{HK}.

We prove that the involution acts as an identity on the group $A_0(S)$, for the K3 surfaces  $S$ over $\bar \QQ$ such that the quotient is $\PR^2$ branched along a sextic and $S$ is equipped with an involution. Since the surface of the quotient is $\PR^2$ which has  trivial $A_0$,  the involution acts as $-1$ in $A_0(S)$. Together, these two information tell us that the group $A_0(S)$ is trivial by Roitman's torsion theorem \cite{R2}.

The main difference of our approach from that of Voisin present in \cite{Voi}, is that we need to perform the techniques of Voisin over a countable algebraically closed field and develop the necessary details for it.

The main result of this paper is the following.

\begin{theorem}
Let $S$ be a  K3 surface defined over $\bar \QQ$ such that there is an involution $i$ on $S$ and $S/i$ is isomorphic to the projective plane $\PR^2$. In addition, the map $S\to S/i$ is branched along a sextic curve in $\PR^2$. Suppose that $S$ contains infinitely many rational curves.  Then the Beilinson's conjecture holds on $S$.
\end{theorem}

{\small \textbf{Acknowledgements:} The author thanks SRM AP for hosting this project. }

\section{Finite dimensionality in the sense of Roitman and $\CH_0$}

First, we recall the notion of finite dimensionality in the sense of Roitman \cite{R1}.

\begin{definition}
\label{defn1}
Let $X$ be a smooth projective variety on $\bar{\mathbb{Q}}$ and let $P$ be a subgroup of $\CH_0(X)$, then we will say that $P$ is finite-dimensional in the Roitman sense, if there exist a smooth projective variety $W$ and a correspondence $\Gamma$ on $W\times X$, such that $P$ is contained in the set $\{\Gamma_*(w)|w\in W\}$.
\end{definition}

The following proposition is due to Roitman \cite{R1} and also to Voisin \cite{Voi} over $\CC$. We give a proof of it on $\bar{\QQ}$. This is crucial as we are working on countable algebraically closed fields.

\begin{theorem}
\label{theorem 1}
Let $S$ be a smooth projective surface over $\bar{\QQ}$ of zero irregularity consisting of infinitely many rational curves. Let $Z$ be a correspondence on $S\times S$. Assume that the image of $Z_*$ from $\CH_0(S)_{hom}$ to $\CH_0(S)$ is finite-dimensional in the Roitman sense. Then the map $Z_*$ from ${\CH_0(S)}_{hom}$ to $\CH_0(S)$ is the zero map.
\end{theorem}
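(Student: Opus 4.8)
The plan is to read off from finite-dimensionality a family of homologically trivial zero-cycles over an auxiliary variety $W$, to reduce the theorem to the vanishing of that family, and then to prove the vanishing; over $\bar{\QQ}$ the last step cannot be run by the usual complex-analytic (Baire-category) argument, and this is precisely where the hypotheses on rational curves and on the ramification divisor enter. I would begin with the formal reductions. Since $S$ has irregularity zero, $\Alb(S)=0$, so $\CH_0(S)_{hom}=\CH_0(S)_{\deg 0}$, and by Roitman's torsion theorem \cite{R2} this group is torsion free. A degree computation shows that $Z_*$ preserves homological triviality (for zero-cycles on a surface this is just preservation of degree $0$), so $Z_*\bigl(\CH_0(S)_{hom}\bigr)\subseteq\CH_0(S)_{hom}$; being a subgroup of the image of $Z_*$, it is finite-dimensional in the Roitman sense with the same parametrising data. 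As the target is torsion free, it suffices to prove that $Z_*(z)$ is torsion — equivalently, zero — for every $z\in\CH_0(S)_{hom}$.

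By Definition \ref{defn1} there are a smooth projective variety $W$ over $\bar{\QQ}$ and a correspondence $\Gamma$ on $W\times S$ with $Z_*\bigl(\CH_0(S)_{hom}\bigr)\subseteq\{\Gamma_*(w)\mid w\in W\}$. After passing to an irreducible component containing a point $w_0$ with $\Gamma_*(w_0)=0$ and replacing $\Gamma$ by the induced difference correspondence on $(W\times W)\times S$, I may assume that $W$ is irreducible, that $\Gamma_*(w)\in\CH_0(S)_{hom}$ for all $w$, that $\Gamma_*(w_0)=0$, and that $Z_*\bigl(\CH_0(S)_{hom}\bigr)\subseteq\{\Gamma_*(w)\}$. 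Put $W^0=\{\,w\in W\mid\Gamma_*(w)=0\,\}$. Since rational equivalence in an algebraic family of zero-cycles is cut out by countably many Zariski-closed conditions, $W^0$ is a countable union of closed subvarieties of $W$ and contains $w_0$; it would suffice to show $W^0=W$, for then every $\Gamma_*(w)$, and in particular every element of $Z_*\bigl(\CH_0(S)_{hom}\bigr)$, vanishes.

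Over $\CC$ one finishes as in \cite{R1} and \cite{Voi}: finite-dimensionality of $Z_*\bigl(\CH_0(S)_{hom}\bigr)$ prevents a positive-dimensional subfamily of pairwise distinct homologically trivial zero-cycles from staying inside the fixed finite-dimensional set (Mumford's infinite-dimensionality estimate), so $W^0$ contains a very general point, and then irreducibility of $W$ together with the Baire property of $W(\CC)$ forces $W^0=W$. Over $\bar{\QQ}$ this last implication fails, because $W(\bar{\QQ})$ is itself a countable union of proper closed subsets and so membership of a very general point carries no information. To replace it I would transfer the question to $S$: fixing a base point $p_0$, I would first establish $Z_*\bigl([x]-[p_0]\bigr)=0$ for $x$ in a dense open $U\subseteq S$ — the part of the $\CC$-argument insensitive to the base field — and then propagate this to all of $\CH_0(S)_{hom}$ using that $S$ carries infinitely many rational curves, since a point $x$ lying on such a curve $C$ is rationally equivalent in $S$ to a point of $U\cap C$, so $Z_*$ annihilates the corresponding difference; the finitely many points on the ramification divisor that escape this sliding would be handled using its effectivity, as in the geometric part of the paper.

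I expect this transfer-and-propagation step to be the genuine obstacle: it is the substitute, valid over a countable field, for the implication ``vanishing at a very general point implies vanishing everywhere,'' and it is where ``infinitely many rational curves'' and the effective ramification divisor are used in an essential way. By contrast the reductions, the torsion-freeness, and the countable-union-of-closed-sets structure of $W^0$ are routine.
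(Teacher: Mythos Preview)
Your reduction to showing $W^0=W$ is clean, and you are right that the Baire step is the one that breaks over $\bar{\QQ}$. But the proposed replacement has a genuine gap: you assert that one can first obtain $Z_*([x]-[p_0])=0$ for $x$ in a dense open $U\subseteq S$ as ``the part of the $\CC$-argument insensitive to the base field,'' and then slide along rational curves. There is no such base-field-insensitive part. In the complex argument the dense-open (in fact: everywhere) vanishing is exactly what Baire/uncountability buys; strip that out and nothing in Roitman's dimension count produces a single $\bar{\QQ}$-point of $S$ where vanishing is known, let alone a Zariski-dense set to propagate from. The propagation idea is also weaker than you need: lying on a common rational curve only tells you $Z_*([x]-[p_0])=Z_*([y]-[p_0])$, so you would need the union of rational curves to connect $U$ to every closed point of $S$, which you have not argued. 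Finally, you invoke ``the ramification divisor'' and its effectivity, but Theorem~\ref{theorem 1} has no ramification hypothesis; that belongs to the later finite-dimensionality theorem, not here.

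The paper's proof takes a quite different route and uses two arithmetic inputs you do not mention. One passes to a general very ample curve $j:C\hookrightarrow S$ chosen so that $J(C)$ is simple (Ciliberto--Van der Geer) and $\dim J(C)>\dim W$, and studies the incidence $R=\{(k,w):Z_*j_*(k)=\Gamma_*(w)\}\subset J(C)\times W$, a countable union of closed sets. The infinitely many rational curves are used not to propagate vanishing on $S$, but to produce infinitely many pairs $(P,Q)\in C\times C$ with $j_*(P-Q)=0$ in $\CH_0(S)$; Manin--Mumford on the image of $\Sym^2C$ in $J(C)$ forces infinitely many of these to be non-torsion. These points feed a single fibre $F_w$ of $R\to W$, and Faltings' theorem (finiteness of $C(L)$ for a number field $L$) shows $F_w$ cannot be defined over a fixed number field, hence its Zariski closure contains a curve; the abelian subvariety this curve generates must, by simplicity of $J(C)$, be all of $J(C)$, giving $Z_*j_*\equiv 0$. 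A blow-up trick then places any given degree-zero cycle on such a curve $C$. The essential difference from your outline is that the substitute for Baire is not ``dense open plus sliding'' but the pair Manin--Mumford $+$ Faltings applied inside a simple Jacobian.
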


\begin{proof}
By the definition of finite dimensionality in the sense of Roitman, there exist a smooth projective variety $W$ and a correspondence $\Gamma$ on $W\times S$, such that the image of $Z_*$ is contained in the set $\Gamma_*(W)$. Let $C\subset S$, be a general smooth projective curve obtained by taking a smooth hyperplane section of $S$.  Now we have the following lemma about $J(C)$.

\begin{lemma}
The abelian variety $J(C)$ is simple for a general hyperplane section.
\end{lemma}
\begin{proof}
Suppose that $J(C)$ has a non-zero proper abelian subvariety $A$ for a general $C$. Then, extending the scalars, we have the geometric generic Jacobian $J(C_{\bar \eta})$ ($\bar\eta$ is the geometric generic points of the open subset $U$ of $\PR^n$ that parametrize the smooth hyperplane sections of $S$. We fix an embedding $S\to \PR^n$). Also we have 
$$A_{\bar\eta}=A\times_{\bar \QQ} \bar\eta$$
is a sub-abelian variety of $J(C_{\bar\eta})$. 

Spreading out both abelian varieties, we have two abelian fibrations $\bcA\to U, \bcJ\to U$, such that the geometric generic fibers are $A_{\bar\eta}, J(C_{\bar\eta})$. Note that the family $\bcA\to U$ is iso-trivial. These two fibrations define two local systems whose stems are $H^1(A_{\bar\eta},\QQ_l)$, $H^1(J(C_{\bar\eta}),\QQ_l)$, and the above cohomologies admit a representation of $\pi_1(U,\bar\eta)$ (the 'etale fundamental group of $U$ at $\bar\eta$).

We know by Picard-Lefshcetz formula that the $\QQ_l$ vector space $H^1(J(C_{\bar\eta}),\QQ_l)$ admits an irreducible representation of the 'etale fundamental group and $H^1(A_{\bar\eta},\QQ_l)$ is a submodule invariant under the action of the fundamental group. So it must be $\{0\}$ or all of $H^1(J(C_{\bar\eta}),\QQ_l)$. Then the Tate module of $A_{\bar\eta}$ is either zero or all of the Tate module of $J(C_{\bar\eta})$. By the Proof of Tate conjecture (due to Faltings and Zarhin), it follows that $A_{\bar\eta}=0$ or $J(C_{\bar\eta})$. Consequently, we have $A=0$ or $J(C)$. In the latter case, if $A$ is not equal to $J(C)$, then it's dimension is strictly less than that of $J(C)$. But 
$$\dim(A)=\dim(A_{\bar\eta})=\dim(J(C_{\bar\eta}))=\dim(J(C))\;.$$
Therefore, $A=J(C)$ for a general $C$ in the linear system.
\end{proof}

We now have such a $C$ inside $S$. Let $j:C\to S$ denote the closed embedding of $C$ in $S$. By the above lemma $J(C)$ is simple. So we have a homomorphism $j_*$ from $J(C)$ to $\CH_0(S)$. Note that for $H_i$ sufficiently ample we can make the dimension of $J(C)$ arbitrarily large so that the dimension of $J(C)$ is greater than that of $W$.

Let $R$ be a set inside $J(C)\times W$, given by

$$\{(k,w)|Z_*j_*(k)=\Gamma_*(w)\}$$

Due to Roitman, we have:

\begin{lemma}
$R$ is a countable union of Zariski-closed subsets in the product.
\end{lemma}

\begin{lemma}
The homomorphism $Z_*j_*$ is zero on $J(C)$.
\end{lemma}

\begin{proof}
Consider the kernel of $j_*$. By Roitman's theorem on torsion points, it contains all torsions in the Jacobian $J(C)$. The torsion points are Zariski dense in the Jacobian, therefore $ker(j_*)$ is Zariski dense as well. Now, by the following argument, the kernel contains infinitely many points of infinite order.

Let $R_i$ be a countable collection of rational curves on $S$ (possibly reducible), then we can take $C$ to be such that $C.R_i\geq 2$, because $C$ is smooth and ample. 

Therefore, $C\times C$ contains infinitely many pairs of points such that the difference of points $P-Q$ is rationally equivalent to zero on $S$. Now, if they are all torsion points, then the intersection 

$$\theta(C\times C)\cap J(C)_{tors}$$ 

is infinite, where 

$$\theta:  C\times C\to J(C)$$ 

the natural map given by 

$$(P,Q)\mapsto [P-Q]\;.$$

This is not possible by the Manin-Mumford theorem, as $J(C)$ is simple of dimension greater than 2 and does not contain abelian surfaces. Therefore, the kernel of $j_*$ contains infinitely many points of infinite order. All these points are obtained from $C\times C$. Consider the diagonal embedding of $C\to Sym^2 C$. Since we have 

$$j_*[P-P_0]=j_*[Q-P_0]$$

for $[P-Q]$  such that $j_*([P-Q])=0$ on the surface we have 
$[P-P_0], [Q-P_0]$ 
belonging to a certain fiber $F_w$ under the projection map of $R\to W$. There are a countably many such $[P_i-P_0], [Q_i-P_0]$ in the fiber of $F_w$. Suppose that the field of definitions of these zero cycles $L$ is a finite extension of $\QQ$, and hence the number of points on $C(L)$ is finite by Falting's Theorem \cite{F}. But $C(L)$ consists of all points of the form $P-P_0$ where $P_0$ is a $L$-rational point in $C$ and $P$ is in $R_i\cap C$ for a rational curve $R_i$ on $S$. Since there are infinitely many rational curves on $S$, there are infinitely many such points on $C(L)$ which contradicts Falting's Theorem, \cite{F}. So $L$ is not finite.

Hence $F_w$ generates a subgroup of $J(C)$  which is not defined over a finite extension $L$ of $\QQ$. In particular, it is not finitely generated. From the geometric side, since it is infinitely generated, it contains a curve $D$ in the Zariski closure of $F_{w}$. Now observe that $F_{w}$ is a countable union of Zariski closed subsets in $J(C)$. Each of these Zariski closed subsets are assumed to be proper subvarieties. Now, if all the countable number of points of the form $P-P_0$ in $F_{w}$ are in one of these Zariski closed subsets say $R$, then their clousre which is the curve $C$ is also in $R$.  Therefore, the curve $C$ generates the abelian variety $J(C)$.
Since $C$ is in $F_{w}$. On $\overline{F_w}$ we have :

$$Z_*j_*(z)=\deg(z)\Gamma_*(\omega)$$
and since $\deg(z)=0$, the above is zero.

Now suppose that all these countably many points $P_i-P_0$ are contained in a finitely many collection of these Zariski closed subsets $R_i$'s, then the Zariski closure $C$ is decomposed as 
$$C=\cup_i (C\cap R_i)$$
that forces $C\subset R_i$ for one $i$.

The hardest case is $C=\cup_i ( C\cap R_i)$ for countably many $R_i$. In that case, each $C\cap R_i$ consists of finitely many points of the form $P_{ij}-P_0$. But, on the other hand, the torsion points are dense in $J(C)$.  Consider the subgroup generated by the points of the form $P_{ij}-P_0$ in $C\cap R_i$. It is a subgroup of finite rank. We call it $H$. Then its Zariski closure is a finite union of translates of an abelian subvariety of $J(C)$. But this group is a  countable group (as the points of the form $P_{ij}-P_0$ are non-torsion), so it's Zarsiki closure contains a curve and it is positive dimensional. So by Mordell-Lang we have that it is a finite union of transltes of an abelian subvariety $B$ of $J(C)$. We know that $J(C)$ is simple and therefore $B=J(C)$. Therefore, $H$ is Zariski dense. Therefore, the subgroup generated by $F_{w}$ is Zariski dense as $H$ is contained in the algebraic  subgroup generated by $F_{w}$ and is equal to $J(C)$ by the previous argument. Therefore, the algebraic subgroup generated by $F_{w}$ is $J(C)$. In $F_{w}$ the homomorphism $Z_*j_*=0$, therefore on the algebraic subgroup generated by $F_{w}$ it is zero. Hence the claim follows.
\end{proof}

Now for any degree zero zero cycle $z_m$ on $S$, we write it as $z_m^+-z_m^-$, where $z_m^+=\sum_i P_1\cdots+P_k$ and $z_m^-=\sum_i P_{k+1}\cdots+P_{2k}$. So we have a tuple of points $(P_1,\cdots,P_{2k})$ in $S^{2k}$. We blow up $S$ at these points. Let $\tau:S'\to S$ be the blow-up, with exceptional divisors $E_i$ over the point $P_i$. Let $H$ belong to $\Pic(S)$, so that $L=\gamma^*(H)-n\sum E_i$ is ample on $S'$ and then consider a multiple of $L$ which is very ample. Then we argue as before saying that there exists a curve $C$ in the linear system of $mL$, such that the kernel of the map
$$J(C)\to \Alb(S)$$
is a simple abelian variety of sufficiently large dimension. Now $\tau(C)$ contains all the points $P_i$. Assume that the zero cycle $z_m$ is annihilated by $alb_S$, any lift of $z_m$ to $S'$ belongs to the kernel of $alb_{S'}$, since $\Alb(S')\to \Alb(S)$ is isomorphism. Let $z'$ be the lift of $z_m$ that is supported on $C$ so $z'$ belongs to $j_*(J(C))$. Now we apply the previous argument to $Z'=Z\circ \tau$, (noting that $Z'_*$ has a finite-dimensional image, as its image is contained in that of $Z_*$) between $S'$ and $S$, we get that
$Z_*(z_m)=Z'_*(z')=0$, so the homomorphism $Z_*$ factors through the albanese map.
\end{proof}

We consider the correspondence $\Gamma=\Delta_S-Graph(i)$. We prove that the image of $\Gamma_*$ is finite-dimensional.

\begin{theorem}
\label{theorem2}
Let $S$ be a K3 surface defined on $\bar \QQ$, which has an involution $i$ on it. Let $S/i$ be the projective plane $\PR^2$. Let the branch locus  of $i$ be a sextic curve $B$ in $\PR^2$. In addition, suppose that $S$ contains infinitely many rational curves. Consider the correspondence $\Delta_S-Graph(i)$ on $S\times S$. Then the image of the push-forward induced by this correspondence at the level of degree zero cycles modulo rational equivalence is finite dimensional.
\end{theorem}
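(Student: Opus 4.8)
The plan is to show that the push-forward $\Gamma_*$ induced by $\Gamma = \Delta_S - \Gr(i)$ on degree-zero cycles has finite-dimensional image in the sense of Definition \ref{defn1}. The key geometric input is the double cover structure $\pi : S \to \PR^2$ branched along the sextic $B$. First I would recall that for a point $x \in \PR^2$, the fiber $\pi^{-1}(x)$ consists of two points $p, i(p)$ (or one point with multiplicity two if $x \in B$), so that for any point $p \in S$ the zero cycle $p + i(p)$ equals $\pi^*(\pi(p))$, which depends only on the class of a line section of $\PR^2$ in $\CH_0$; concretely, $p + i(p) \sim \pi^*(\mathrm{pt})$ is a \emph{fixed} class $\delta \in \CH_0(S)$ independent of $p$. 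Hence for any point $p$,
\[
\Gamma_*(p) = p - i(p) = 2p - (p + i(p)) = 2p - \delta .
\]
Extending linearly, for a degree-zero cycle $z = \sum n_j p_j$ (so $\sum n_j = 0$) we get $\Gamma_*(z) = 2z - (\sum n_j)\delta = 2z$. So on $\CH_0(S)_{hom}$ the operator $\Gamma_*$ is just multiplication by $2$, and the image of $\Gamma_*$ on \emph{all} of $\CH_0(S)$ is contained in $2\,\CH_0(S) + \ZZ\delta$.

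The next step is therefore to prove that $2\,A_0(S)$ together with the integral multiples of $\delta$ — equivalently, the image of $\Gamma_*$ — is finite-dimensional in Roitman's sense. Here I would use that $S$ contains infinitely many rational curves and invoke the same mechanism as in the proof of Theorem \ref{theorem 1}: choose a sufficiently ample smooth curve $C \subset S$ obtained as a multi-section, so that $j_* : J(C) \to \CH_0(S)_{hom}$ is surjective (using $\Alb(S) = 0$ and the Lefschetz/Bertini argument), and $J(C)$ is simple of large dimension by the cited lemma. Then every class in $A_0(S)$, hence every class in $2A_0(S)$, lies in $j_*(J(C))$; taking $W = J(C) \times C$ (the second factor to absorb the fixed translate $\delta$, realised via the embedding $C \hookrightarrow S$ and a base point) and $\Gamma$ the obvious correspondence pulling back the universal zero cycle, we exhibit $\im(\Gamma_*)$ inside a set of the form $\{\Gamma'_*(w) \mid w \in W\}$. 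This is exactly the definition of finite dimensionality.

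The main obstacle I anticipate is being careful about the \emph{non-homologically-trivial} part: $\Gamma_*$ does not land in $\CH_0(S)_{hom}$ on the nose (it sends a point $p$ to $2p - \delta$, which has degree $2 - \deg\delta$), so one must check that the ambient set $2\,\CH_0(S) + \ZZ\delta$ is still cut out by a single correspondence on a single smooth projective parameter variety. This is handled by noting that $\CH_0(S)$ is generated by $A_0(S)$ and one fixed class of a point, and $A_0(S)$ itself is the image of $J(C)$; so the whole group is parametrised by $J(C) \times S$ (or $J(C)\times C$), and the degree bookkeeping is absorbed into finitely many connected components of the parameter space, which does not affect finite dimensionality in the sense of Definition \ref{defn1}. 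A secondary point to verify is that the identity $p + i(p) = \pi^*(\pi(p))$ in $\CH_0(S)$ is genuinely independent of $p$: this is the projection formula for the finite flat degree-two morphism $\pi$ together with the fact that all points of $\PR^2$ are rationally equivalent, and it is where the hypothesis that the quotient is $\PR^2$ (so $A_0(S/i) = 0$) enters decisively.
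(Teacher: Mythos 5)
Your opening computation is correct: since $\CH_0(\PR^2)=\ZZ$, the projection formula for the double cover $\pi$ gives $z+i_*z=\pi^*\pi_*z=0$ for every $z\in A_0(S)$, so $(\Delta_S-\Gr(i))_*$ acts as multiplication by $2$ on $A_0(S)$. But this observation makes the theorem \emph{harder}, not easier: $A_0(S)$ is divisible, so $2A_0(S)=A_0(S)$ and the image you must control is all of $A_0(S)$. The step where you then ``exhibit'' finite dimensionality is circular. The Lefschetz/Bertini argument only gives surjectivity of $J(C)\to\Alb(S)=0$; it does \emph{not} give surjectivity of $j_*:J(C)\to\CH_0(S)_{hom}$ for a single fixed curve $C$. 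That latter statement is precisely finite dimensionality of $A_0(S)$ in the sense of Definition \ref{defn1}: over $\CC$ it is false for a K3 surface by Mumford's theorem, and over $\bar{\QQ}$ it is the Beilinson conjecture the paper is trying to establish. No soft argument can supply it, so your proposal assumes the conclusion at exactly the point where the work has to be done. (A secondary confusion: the image of $\Gamma_*$ automatically consists of degree-zero cycles, since $\deg(z-i_*z)=0$, so the ``non-homologically-trivial part'' you worry about is not really an issue.)

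The paper's proof does genuine geometric work that your proposal omits. It fixes a very ample linear system $|L|$ on $\PR^2=S/i$ of dimension $g+n$; for a general $(s_1,\dots,s_{g+n})\in S^{g+n}$ there is a unique $C\in|L|$ through the images $\pi(s_i)$, and the cycle $\sum_i(s_i-i(s_i))$ is supported on the branched double cover $\wt{C}$, its class depending only on its image in the anti-invariant part of $J(\wt{C})$. Thus $\Gamma_*$ factors through the anti-invariant Jacobian fibration $\bcJ^-$ over $|L|$, and a dimension count (the ramification contribution $\Gamma.C/2$ being bounded independently of $n$ via Faltings applied to the branch curve) shows the fibers of $S^{g+n}\to\bcJ^-$ have dimension $n+1-\Gamma.C/2>0$ for $n$ large. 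These positive-dimensional fibers, combined with the infinitely many rational curves and the Beauville--Voisin result, allow one point to be dropped, giving $\Gamma_*(S^{g+n})=\Gamma_*(S^{g+n-1})$ and hence, by induction, $\im(\Gamma_*)=\Gamma'_*(S^{2m_0})$ for a single correspondence $\Gamma'$, which is the required finite dimensionality. To salvage your approach you would need to replace the single curve $C$ by such a family together with the dimension count; the reduction to ``multiplication by $2$'' does not substitute for it.
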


\begin{proof}

We take a very ample line bundle $L$ on the surface $S/i=\PR^2$. Let the genus of a curve in the linear system of $L$ be $g$. We consider the exact sequence of sheaves
$$0\to\bcO(C)\to \bcO(\PR^2)\to \bcO(\PR^2)/\bcO(C)\to 0$$
Then tensoring with $\bcO(-C)$ we get
$$0\to \bcO(\PR^2)\to \bcO(-C)\to \bcO(-C)|_C\to 0\;.$$
This gives rise to the exact sequence of the global section of sheaves
$$0\to \CC\to H^0(\PR^2, L)\to H^0(C,L|_C)\to 0$$
this sequence is exact on the right because the irregularity of $S/i$ is zero. Now by the ampleness of $L$, we find that the degree of $L|_C$ is greater than zero, so we have by Riemann-Roch
$$\dim(H^0(C,L|_C))=g+n-1\;.$$

Here   $n$ is large by using the fact that the projective plane is  having anti-canonical divisor is ample  and we can choose $L$ to be of sufficiently large degree.

Hence, the dimension of $H^0(\PR^2,L)$ is equal to $g+n$.

Now for any $C$ in the linear system of $L$, we have its pre-image on $S$, say $\wt{C}$ is a two-fold branched cover of $C$. This double cover is ramified along the intersection of $C$ with the branch locus $B$ of the involution.

Let us consider a general point in $S^{g+n}$, say 

$$(s_1,\cdots,s_{g+n})$$

Let 
$$(\sigma'_1,\cdots,\sigma'_{g+n})$$ 
be the image of this point in $\PR^2=S/i$ under the quotient map. Then there exists a unique curve $C$ in $L$ such that its image under the quotient  contains all the points $\sigma'_i$, and its double cover contains $s_i$ for all $i$. Then the image of 
$$\sum_j s_j-\sum_j i(s_j)$$ 
depends on the element
$$alb_{\wt{C}}(\sum_j s_j-i(s_j))\in J(\wt{C})$$

So, the map from $S^{g+n}$ to $A_0(S)$ factors through the anti-invariant part  of the Jacobian fibrations $\bcJ^-$, where ${\bcC}$ is the universal family of curves on $\PR^2$ parametrized by the linear system $|L|$, and $\wt{\bcC}$ is its universal family of branched  double cover.

 So, the dimension of the anti-invariant part of the Jacobian  variety of the corresponding ramified double cover is
$$2g-1+B.C/2-g=g-1+B.C/2\;.$$

 Hence, the dimension of the anti-invariant part of the Jacobian fibration is equal to $2g+n-1+B.C/2$, while the dimension of $S^{g+n}$ is $2g+2n$. So, the fibers of the map
$$\phi: S^{g+n}\to \bcJ^- $$
are of dimension
$$2g+2n-(2g+n-1+B.C/2)=n+1-B.C/2$$
Now $B$ is a sextic curve in $\PR^2$ and is also of genus $10$. $n=C.(-K_{\PR^2})$. By the adjunction formula, $-K_{\PR^2}=3H$, where $H$ is a general linear hyperplane section of $\PR^2$. So $-K_{\PR^2}$ gives a linear system of cubics. Hence, a general curve $D$ in this linear system is an elliptic curve. Suppose that $C$ of degree $d$. Then we need to prove that, $n+1-B.C/2>0$. The above is
$$3d+1-3d=1$$

So, the general fibers of the above map are positive dimensional. Now consider the branch locus $B$ in $\PR^2$ and pull it back to $S$. The pullback is an ample  curve, denoted by the same letter $B$.  The curve $B$ is fixed point-wise by involution $i$. If we pullback this ample divisor $B$ to $S^{g+n}$ under the $j$-th projection $\pi_j$ and consider the divisor $\sum_j \pi_j^{-1}(B)$, this is an ample divisor on $S^{g+n}$. This ample divisor will intersect the general fiber of the map $\phi$. Denote this fiber by $F_s$ and there is a point $c$ such that 
$$(s_1,\cdots,c,\cdots,s_{g+n})\mapsto alb_{\wt{C}}(\sum_j s_j-i(s_j)+c-i(c))$$
for points $(s_1,\cdots,c,\cdots,s_{g+n})$ in $F_s$. Now, this cycle $c-i(c)=0$ as $B$ is fixed pointwise.

Hence, we have $\Gamma_*(S^{g+n-1})=\Gamma_*(S^{g+n})$, where $\Gamma=\Delta_S-Graph(i)$. This proves that the image of $\Gamma_*$ is finite dimensional by the following argument.

Now we prove by induction that $$\Gamma_*(S^{m_0})=\Gamma_*(S^m)$$ for all $m\geq g+n$.
So suppose that $$\Gamma_*(S^k)=\Gamma^*(S^{m_0})$$ for $k\geq g+n$, then we have to prove that $$\Gamma_*(S^{k+1})=\Gamma_*(S^{m_0})$$ So any element in $$\Gamma_*(S^{k+1})$$ can be written as  $$\Gamma_*(s_1+\cdots+s_{m_0})+\Gamma_*(s)$$ Now let $k-m_0=l$, then $m_0+1=k-l+1$. Since $k-l<k$, we have $k-l+1\leq k$, so $m_0+1\leq k$, so we have the cycle
$$\Gamma_*(s_1+\cdots+s_{m_0})+\Gamma_*(s)$$
supported on $S^k$, hence on $S^{m_0}$. Hence, we have that $$\Gamma_*(S^{m_0})=\Gamma_*(S^k)$$ for all $k$ greater or equal than $g+n$. Now any element $z$ in $A_0(S)$, can be written as a difference of two effective cycle $z^+,z^-$ of the same degree. Then we have
$$\Gamma_*(z)=\Gamma_*(z^+)-\Gamma_*(z_-)$$
and $\Gamma_(z_{\pm})$ belong to $\Gamma_*(S^{m_0})$. So let $\Gamma'$ be the correspondence on $S^{2m_0}\times S$ defined as
$$\sum_{l\leq m_0}(pr_i,pr_{S})^*\Gamma-\sum_{m_0\leq l\leq 2m_0}(pr_i,pr_{S})^* \Gamma$$
where $\pr_i$ is the $i$-th projection from $S^{m_0}$ to $S$, and $\pr_{S}$ is from $S^i\times S$ to the last copy of $S$. Then we have
$$\im(\Gamma_*)=\Gamma'_*(S^{2m_0})\;.$$

Therefore, the image of $(\Delta_{S}-Graph(i_{S}))_*$ is finite dimensional in the sense of Roitman.

\end{proof}
Since the irregularity of $S$ is zero, we have
\begin{theorem}
Involution $i_*$ acts as an identity in the group $A_0(S)$.
\end{theorem}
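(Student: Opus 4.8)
The plan is simply to feed the correspondence $Z=\Delta_S-\Gr(i)$ into Theorem \ref{theorem 1}, using Theorem \ref{theorem2} to verify its hypothesis. First I would record that a smooth projective K3 surface has $h^{0,1}=0$, hence irregularity zero and $\Alb(S)=\{0\}$; by assumption $S$ also carries infinitely many rational curves, so the running hypotheses of Theorem \ref{theorem 1} are in force. Theorem \ref{theorem2} then says precisely that the image of $Z_*=(\Delta_S-\Gr(i))_*\colon \CH_0(S)\to \CH_0(S)$ is finite-dimensional in the sense of Roitman.

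Next I would apply Theorem \ref{theorem 1} to this $Z$. Its conclusion is that the homomorphism $Z_*$ restricted to $\CH_0(S)_{hom}$ factors through the albanese map $alb_S\colon \CH_0(S)_{hom}\to \Alb(S)$; since $\Alb(S)=\{0\}$, this forces $Z_*$ to vanish identically on $\CH_0(S)_{hom}=A_0(S)$. Finally, unwinding the action of the correspondence, for any zero cycle $z$ one has $(\Delta_S)_*(z)=z$ and $(\Gr(i))_*(z)=i_*(z)$, so $Z_*(z)=z-i_*(z)$. Hence $z-i_*(z)=0$ in $\CH_0(S)$ for every $z\in A_0(S)$, i.e. $i_*(z)=z$; this is exactly the assertion that $i_*$ acts as the identity on $A_0(S)$.

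The only point that needs attention — and it is bookkeeping rather than a genuine obstacle — is to check that the notion of ``finite-dimensional image'' produced by Theorem \ref{theorem2} is literally the hypothesis consumed by Theorem \ref{theorem 1}, and that $S$ meets all the standing assumptions there (smooth projective over $\bar{\QQ}$, irregularity zero, infinitely many rational curves). Once these compatibilities are noted, the statement is a formal corollary of the two preceding theorems.
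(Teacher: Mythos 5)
Your proposal is correct and follows exactly the route the paper intends: the paper derives this theorem immediately from Theorem \ref{theorem 1} applied to $Z=\Delta_S-\Gr(i)$, with Theorem \ref{theorem2} supplying the finite-dimensionality hypothesis and $\Alb(S)=0$ (irregularity zero for a K3) forcing $Z_*$ to vanish on $A_0(S)$. Your explicit unwinding of $Z_*(z)=z-i_*(z)$ is precisely the intended argument, merely spelled out more carefully than the paper's one-line justification.
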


Since the quotient surface is the projective plane, the involution $i_*$ also acts as -1 on $A_0(S)$ and it follows that : 
\begin{theorem}
Let $S$ be a K3 surface defined over $\bar \QQ$, equipped with an involution $i$ such that the quotient map $S\to \PR^2\cong S/i$ is branched along a sextic curve and $S$ contains infinitely many rational curves. Then $A_0(S)=\{0\}$.
\end{theorem}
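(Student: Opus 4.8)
The plan is to combine the three previous theorems of the paper into the desired vanishing $A_0(S) = \{0\}$ via Roitman's torsion theorem. First I would invoke Theorem~\ref{theorem2}: since $S$ is a K3 surface with an involution $i$ whose fixed locus contains a sextic (the branch curve of $S/i \cong \PR^2$), and since $S$ contains infinitely many rational curves, the correspondence $\Gamma = \Delta_S - \Gr(i)$ induces a push-forward $\Gamma_*$ on $\CH_0$ whose image is finite-dimensional in the sense of Roitman (Definition~\ref{defn1}). Next, because $S$ is a K3 surface it has irregularity zero, so Theorem~\ref{theorem 1} applies to $Z = \Gamma$: the map $Z_*$ restricted to $\CH_0(S)_{hom}$ factors through the zero map (the albanese of $S$ being trivial). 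Concretely this says that for every degree-zero cycle $z$ on $S$ we have $(\Delta_S - \Gr(i))_*(z) = 0$ in $A_0(S)$, i.e.\ $z = i_*(z)$; this is exactly the content of the penultimate Theorem, that $i_*$ acts as the identity on $A_0(S)$.

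The second ingredient is the geometry of the quotient. Since $S/i \cong \PR^2$, the projection $\pi : S \to \PR^2$ is a degree-two map, and for a closed point $x \in S$ one has $\pi^*\pi_*(x) = x + i(x)$ in $\CH_0(S)$. Because $\PR^2$ is rational, $A_0(\PR^2) = 0$ (indeed $\CH_0(\PR^2) \cong \ZZ$ via the degree map), so for any degree-zero cycle $z$ on $S$ we get $\pi_*(z)$ is a degree-zero cycle on $\PR^2$, hence rationally trivial, hence $x + i(x)$-type combinations pull back to zero: explicitly $z + i_*(z) = \pi^*\pi_*(z) = 0$ in $A_0(S)$. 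Thus $i_*$ acts as $-1$ on $A_0(S)$.

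Combining the two, for every $z \in A_0(S)$ we have simultaneously $i_*(z) = z$ and $i_*(z) = -z$, whence $2z = 0$; therefore $A_0(S)$ is $2$-torsion, in particular a torsion group. The final step is to kill the torsion: by Roitman's torsion theorem \cite{R2}, the albanese map induces an isomorphism on torsion subgroups of $\CH_0$ of degree zero, so the torsion of $A_0(S)$ injects into $\Alb(S)(\bar\QQ) = 0$ (the irregularity of the K3 surface $S$ being zero). Hence $A_0(S)$ has no nonzero torsion, and since we have shown every element is torsion, $A_0(S) = \{0\}$.

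I expect the genuinely hard part to lie entirely in the earlier theorems --- particularly the finite-dimensionality argument of Theorem~\ref{theorem2}, where the ramification divisor $\Gamma.C$ must be controlled over a number field and the dimension count $n + 1 - \Gamma.C/2 > 0$ forced by taking large multiples of $L$, together with the arithmetic input (Faltings' theorem, Manin--Mumford, Beauville--Voisin) that makes the countable-field situation work. Given those results as stated, the final deduction recorded here is a short and formal assembly: invoke $i_* = \id$ from finite-dimensionality plus vanishing albanese, invoke $i_* = -1$ from rationality of the quotient, conclude $2$-torsion, and apply Roitman to remove it.
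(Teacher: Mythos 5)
Your proof is correct and follows exactly the paper's own route: the paper likewise combines $i_*=\mathrm{id}$ on $A_0(S)$ (from the finite-dimensionality of $(\Delta_S-\Gr(i))_*$ and Theorem~\ref{theorem 1}) with $i_*=-1$ (from $A_0(\PR^2)=0$) to get that every class is $2$-torsion, and then kills the torsion by Roitman's theorem since $\Alb(S)=0$. The only difference is that you spell out the assembly more explicitly than the paper, which leaves this final deduction essentially as a remark.
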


\begin{example}
According to \cite{BHT} any K3 surface defined over an algebraically closed field in characteristic 0 with the Picard group isomorphic to $\ZZ$ and generated by a degree two divisor contains infinitely many rational curves. In the paper by \cite{EJ}, the authors construct examples of K3 surfaces branched over $\PR^2$ along a sextic curve with Picard group isomorphic to $\ZZ$ generated by a degree two divisor. Those examples satisfy the Beilinson conjecture according to our theorem \ref{theorem2}.
\end{example}

\begin{remark}
In the two theorems, we have emphasized the use of infinitely many rational curves which is useful to apply \ref{theorem 1} and also note that \ref{theorem 1} is only applicable over $\bar \QQ$ and not over $\CC$ as it uses Falting's result intrinsically. So, one should not be tempted to generalize this result over $\CC$ and it is false over $\CC$.
\end{remark}

Data Availability : There is no associated data.

Conflict of interest: There is no conflict of interest.


\begin{thebibliography}{AAAAA}

\bibitem[B]{B} R.Barlow, {\em Rational equivalence of zero cycles for some surfaces with $p_g=0$}, Invent. Math. 1985, no. 2, 303-308.

\bibitem[BV]{BV} A Beauville, C. Voisin, {\em On the Chow ring of a K3 surface}, Journal of Algebraic Geometry , 13, 2004, 417-426.
\bibitem[BHT]{BHT} F. Bogomolov, B.Hassett, Y.Tschinkel, {\em Constructing rational curves on K3 surfaces}, Duke Mat. Journal, 157(3), 535-550, 2011.
\bibitem[CV]{CV} C.Ciliberto, G. Van Der Geer, {\em On the Jacobian of a hyperplane sections of a surface}, Classification of irregular varieties, minimal models and abelian varieties, Proceeding of a Conference, Trento, Italy, 1990, Springer.
\bibitem[BKL]{BKL}S.Bloch, A.Kas, D.Lieberman, {\em Zero cycles on surfaces with $p_g=0$}, Compositio Mathematicae, tome 33, no 2(1976), page 135-145.

\bibitem[EJ]{EJ} Andreas-Stephan Elsenhans, J¨org Jahnel, {\em K3 Surfaces of Picard Rank One and Degree Two}, Lecture Notes in Computer Science,  Volume 5011, ANTS 2008.

\bibitem[F]{F} G. Faltings, {Finiteness theorem for abelian varieties over number fields}, Inventiones Math., 73(3), 349-366, Springer 1983.


   
\bibitem[GT]{GT} V.Guletskii, A.Tikhomirov {\em Algebraic cycles on quadric sections of cubics in $\PR^4$ under the action of symplectomorphisms}, Proc. of the Edinburgh Math. Soc. 59 (2016) 377 - 392.

\bibitem[HK]{HK} D.Huybrechts, M.Kemeny, {\em Stable maps and Chow groups}, Documenta Math. 18, 2013, 507-517.
\bibitem[H]{H} B.Harbourne, {\em Anticanonical rational surfaces}, Trans. AMS, Volume 349, no. 3, 1997, 1191-1208
\bibitem[IM]{IM} H.Inose, M.Mizukami, {\em Rational equivalence of 0-cycles onn some surfaces with $p_g=0$}, Math. Annalen, 244, 1979, no. 3, 205-217.


\bibitem[M]{M} D.Mumford, {\em Rational equivalence for $0$-cycles on surfaces.}, J.Math Kyoto Univ. 9, 1968, 195-204.
\bibitem[PW]{PW} C.Pedrini, C.Weibel, {\em Some examples of surfaces of general type for which Bloch's conjecture holds}, arXiv:1304.7523.

\bibitem[R]{R} A.Roitman, {\em $\Gamma$-equivalence of zero dimensional cycles (Russian)}, Math. Sbornik. 86(128), 1971, 557-570.
\bibitem[R1]{R1}A.Roitman, {\em Rational equivalence of 0-cycles}, Math USSR Sbornik, 18, 1972, 571-588
\bibitem[R2]{R2} A.Roitman, {\em The torsion of the group of 0-cycles modulo rational equivalence}, Ann. of Math. (2), 111, 1980, no.3, 553-569
\bibitem[SV]{SV} A.Suslin, V.Voevodsky, {\em Relative cycles and Chow sheaves}, Cycles, transfers, motivic homology theories, 10-86, Annals of Math studies.
\bibitem[Voi]{Voi}C.Voisin,{\em Symplectic invoultions of K$3$ surfaces act trivially on $CH_0$}, Documenta Mathematicae 17, 851-860,2012.
\bibitem[Vo]{Vo} C.Voisin, {\em Complex algebraic geometry and Hodge theory II}, Cambridge studies of Mathematics, 2002.
\bibitem[V]{V}C.Voisin, {\em Bloch's conjecture for Catanese and Barlow surfaces}, Journal of Differential Geometry, 2014, no.1, 149-175.
\bibitem[VC]{VC} C.Voisin, {\em Sur les zero cycles de certaines hypersurfaces munies d'un automorphisme}, Ann. Scuola Norm. Sup. Pisa Cl. Sci., (4), 19, 1992, no.4, 473-492.
\end{thebibliography}
\end{document}